\newtheorem{theorem}{Theorem}
\newtheorem{lemma}[theorem]{Lemma}
\newtheorem{observation}[theorem]{Observation}
\def\vertex(#1){\put(#1){\circle*{2}}}
\def\vertexo(#1){\put(#1){\circle{2}}}
\def\vert(#1){\put(#1){\circle*{1.5}}}
\def\verto(#1){\put(#1){\circle{1.5}}}
\def\lab(#1)#2{\put(#1){\makebox(0,0)[c]{#2}}}
\newcommand{\edim}{{\rm edim}}
\tikzset{My Style/.style={draw, circle, fill=black,scale=0.3}} 
\title{Graphs with the edge metric dimension smaller than the metric dimension
\footnotetext{\small\tt knor@math.sk, smajstor@mathos.hr, aodenteo@gmail.com, skrekovski@gmail.com, and\\ \hspace*{0.55cm}ismael.gonzalez@uca.es}}
\author{Martin Knor$^1$, Snje\v{z}ana Majstorovi\'c$^2$, Aoden Teo Masa Toshi$^3$, \\Riste \v Skrekovski$^{4,5}$ and Ismael G. Yero$^6$\\[0.3cm]
\small $^1$ \it Slovak University of Technology in Bratislava, Bratislava, Slovakia \\[0.1cm]
\small $^2$ \it University of Osijek, Department of Mathematics, Croatia\\[0.1cm]
\small $^3$ \it Independent researcher, Singapore\\[0.1cm]
\small $^4$ \it University of Ljubljana, FMF, 1000 Ljubljana, Slovenia \\[0.1cm]
\small $^5$ \it Faculty of Information Studies, 8000 Novo Mesto, Slovenia \\[0.1cm]
\small $^6$ \it Universidad de C\'adiz, Departamento de Matem\'aticas, Algeciras, Spain\\
}
\begin{document}
\maketitle
\begin{abstract}
Given a connected graph $G$, the metric (resp. edge metric) dimension of
$G$ is the cardinality of the smallest ordered set of vertices that uniquely
identifies every pair of distinct vertices (resp. edges) of $G$ by means
of distance vectors to such a set. In this work, we settle three open problems on (edge) metric dimension of graphs.
Specifically, we show that for every $r,t\ge 2$ with $r\ne t$, there is $n_0$, such that for every $n\ge n_0$ there exists a graph $G$ of order $n$ with metric dimension $r$ and edge metric dimension $t$, which among other consequences, shows the existence of infinitely many graph whose edge metric dimension is strictly smaller than its metric dimension. In addition, we also prove that it is not possible to bound the edge metric dimension of a graph $G$ by some constant factor of the metric dimension of $G$.
\end{abstract}

{\it Keywords:} Edge metric dimension; metric dimension; unicyclic graphs.

{\it AMS Subject Classification numbers:}   05C12; 05C76





\section{Introduction}

Metric dimension is nowadays a well studied topic in graph theory and
combinatorics, as well as in some computer science applications, and the theory involving it is indeed full of interesting
results and open questions.
One recent issue that has attracted the attention of several researchers
concerns a variant of the standard metric dimension, in which it is required to uniquely
recognize the edges of a graph, instead of its vertices, and by using vertices as the recognizing elements.
This variant was introduced in \cite{Kel}, and since its appearance, a
significant number of works have been published.
In this sense, we mention the most recent ones
\cite{Filipovi2019,Geneson2020,Peterin2020,Zhang2020,Zhu,Zubrilina}. Specifically, the edge metric dimension is studied in several situations as follows: \cite{Filipovi2019} is dedicated to study several generalized Petersen graphs; in \cite{Geneson2020}, a number of results about pattern avoidance in graphs with bounded edge metric dimension are given; \cite{Peterin2020} centers the attention on some product graphs (corona, join and lexicographic); \cite{Zhang2020} studies some convex polytopes and related graphs; in \cite{Zhu}, a characterization of graphs with the largest possible edge metric dimension (order minus one) is given; and finally, in \cite{Zubrilina} the Cartesian product of any graph with a path is studied, as well as, it is proved to be not possible to bound the metric dimension of a graph $G$ by some constant factor of the edge metric dimension of $G$. We should also remark some results from the seminal article \cite{Kel}. There was proved for instance that computing the edge metric dimension of graphs is NP-hard, that can be approximated within a constant factor, and that becomes polynomial for the case of trees. Further, some bounds and closed formulas for several classes of graphs including trees, grid graphs and wheels (among others), were also deduced in \cite{Kel}.

We recall that the parameter edge metric dimension (from \cite{Kel}) studied here is not the same as that one defined in \cite{Nasir}, where the authors studied the metric dimension of the line graph of a graph (namely edges uniquely recognizing edges), and called such parameter as edge metric dimension, although it was further renamed as the edge version of metric dimension in \cite{Liu}.

In the next we recall the necessary terminology and notation.
We consider only simple and connected graphs.
Let $G$ be a graph and let $u,v$ be its vertices.
By $d_G(u,v)$ (or by $d(u,v)$ when no confusion is likely) we denote the
distance from $u$ to $v$ in $G$.
Let $z$ be a vertex of $G$.
We say that $z$ \emph{identifies} (\emph{resolves} or \emph{determines})
a pair of vertices $u,v\in V(G)$, if $d_G(u,z)\ne d_G(v,z)$,
An ordered set of vertices $S$ is a \emph{metric generator} for $G$
if every two vertices $u,v\in V(G)$ are identified by a vertex of $S$.
The \emph{metric dimension} of $G$ is then the cardinality
of the smallest metric generator for $G$.
Such cardinality is denoted by $\dim(G)$ and a metric generator
of cardinality $\dim(G)$ is known as a \emph{metric basis}.
It is necessary to remark that the concepts above were first defined
in \cite{Blumenthal1953} for a more general setting of metric spaces.
The concepts were again independently rediscovered for the case
of graphs in \cite{Harary1976} and \cite{Slater1975}, where metric
generators were called resolving sets and locating sets, respectively.
Also, in \cite{Slater1975}, the metric dimension was called locating
number.
The terminology of metric generators was first used in \cite{Sebo2004}.

Let $G$ be a graph and let $S$ be an ordered set of vertices of $G$.
For every $v\in V(G)$ we can consider the vector $r(v|S)$ of distances
from $v$ to the vertices in $S$.
If $S$ is a metric generator, then all such vectors are pairwise different.
The vector $r(v|S)$ is known as the \emph{metric representation} of $v$
with respect to $S$.

The concept of edge metric dimension was first described in \cite{Kel},
as a way to uniquely recognize the edges of a given graph $G$.
A vertex $z\in V(G)$ \emph{distinguishes} two edges $e,f\in E(G)$
if $d_G(e,z)\ne d_G(f,z)$, where $d_G(e,z)=d_G(uv,z)=\min\{d_G(u,z),d_G(v,z)\}$.
A set of vertices $S\subset V(G)$ is an \emph{edge metric generator}
for $G$, if any two edges of $G$ are distinguished by a vertex of $S$.
The \emph{edge metric dimension} of $G$ is the cardinality of the smallest
edge metric generator for $G$, and is denoted by $\edim(G)$.
An edge metric generator of cardinality $\edim(G)$ is known as an
\emph{edge metric basis}.
The edge metric representation is defined analogously as in the case
of the metric dimension.


\section{Edge metric dimension versus metric dimension}

One would expect that $\dim(G)$ and $\edim(G)$ are related.
The search for a relationship between these two invariants (in a shape
of a bound for instance) was of interest in the seminal article \cite{Kel},
as well as in the subsequent works on the topic (see also for instance \cite{Zubrilina}).
In this sense, in \cite{Kel}, several families of graphs for which
$\dim(G)<\edim(G)$, or $\dim(G)=\edim(G)$, or $\dim(G)>\edim(G)$ were
presented.
For the last case, only one construction was given, namely the Cartesian
product of two cycles $C_{4r}\Box C_{4t}$.
It was shown in \cite{Kel} that
$\edim(C_{4r}\Box C_{4t})=3<4=\dim(C_{4r}\Box C_{4t})$. In consequence, it was claimed in \cite{Kel} that the metric dimension and the edge metric dimension of graphs seemed to be not comparable in general. This example above and the other results from \cite{Kel} allowed the authors of that article to point out the following questions.
\begin{itemize}
\item[{\rm (i)}]
For which integers $r,t,n\ge 1$, with $r,t\le n-1$, can be constructed a graph $G$ of order $n$ with $\dim(G)=r$ and $\edim(G)=t$?
\item[{\rm (ii)}]
Is it possible that $\dim(G)$ or $\edim(G)$ would be bounded from above
by some constant factor of $\edim(G)$ or $\dim(G)$, respectively?
\item[{\rm (iii)}] Can you construct some other families of graphs
for which $\dim(G)>\edim(G)$?
\end{itemize}

Note that the question (i) is precisely the realization of graphs $G$ with prescribed values on its order, metric dimension and edge metric dimension, while the question (ii) is equivalent to ask whether the ratios
$\frac{\edim(G)}{\dim(G)}$ and $\frac{\dim(G)}{\edim(G)}$ are bounded
from above. The third question can be settled as a consequence of the other two. Realization results concerning the case in which $\dim(G)\le \edim(G)$ were already studied in \cite{Kel}, although not completed. With respect to the ratios, it was proved in
\cite{Zubrilina} that $\frac{\edim(G)}{\dim(G)}$ is not bounded from above. The other possibility has never been studied till now.

In this work we deal with these three problems mentioned above. That is, our results complete the unboundedness results given in \cite{Zubrilina}, while studying the ratio $\frac{\dim(G)}{\edim(G)}$, and thus, the problem in (ii) is now completely settled. We also give positive answer to (iii), and moreover, we present an almost complete answer to (i), since we show that for every $r,t\ge 2$ with $r\ne t$, there is $n_0$, such that for every $n\ge n_0$ there exists an outerplanar graph $G$ (a cactus graph indeed), of order $n$ with $\dim(G)=r$ and $\edim(G)=t$. This result is in a sense best possible, because if $\edim(G)=1$, then $G$ is a path of length at least $2$, and consequently
$\dim(G)=1$ as well. We remark that a similar result for $2\le r\le t\le 2r$ was proved in \cite{Kel},
where $n_0$ was shown to be at most $2r+2$. So, our result complements the former one and a weaker version of this former
result (with a weaker bound for $n_0$) can be proved also using a variant of
our construction.

As a consequence of our results it is clear the existence of infinite families of graphs $G$ for which
the differences $\edim(G)-\dim(G)$ and $\dim(G)-\edim(G)$ are arbitrarily large. Proving that the difference $\edim(G)-\dim(G)$ is arbitrarily large was already presented in \cite{Kel}. However, the other difference $\dim(G)-\edim(G)$ was only proved to be at most 1 in \cite{Kel}, and there was no more knowledge on this issue. Clearly, the unboundedness of the ratio $\frac{\dim(G)}{\edim(G)}$ gives as a consequence that $\dim(G)-\edim(G)$ can as large as possible.

While graphs for which $\dim(G)<\edim(G)$ are very common, and they are present in several investigations already published, the opposed version $\dim(G)<\edim(G)$ seemed to be more elusive till now. We have first observed that $K_2$ is the unique connected simple graph whose edge metric
dimension is $0$. Since $\dim(K_2)=1$, $K_2$ is the smallest graph which has the edge metric
dimension smaller than the metric dimension.
For non-trivial examples one needs to consider graphs of order at least 10.
By exhaustive computer search we found that the smallest possible graphs $G$
(different from $K_2$) satisfying the inequality $\edim(G)<\dim(G)$, are the
five graphs on $10$ vertices depicted in Figure~{\ref{fig:g10}}. Moreover, we have found 61 such graphs of order 11.

\begin{figure}[ht]
\centering
\includegraphics[width=130mm, height=80mm]{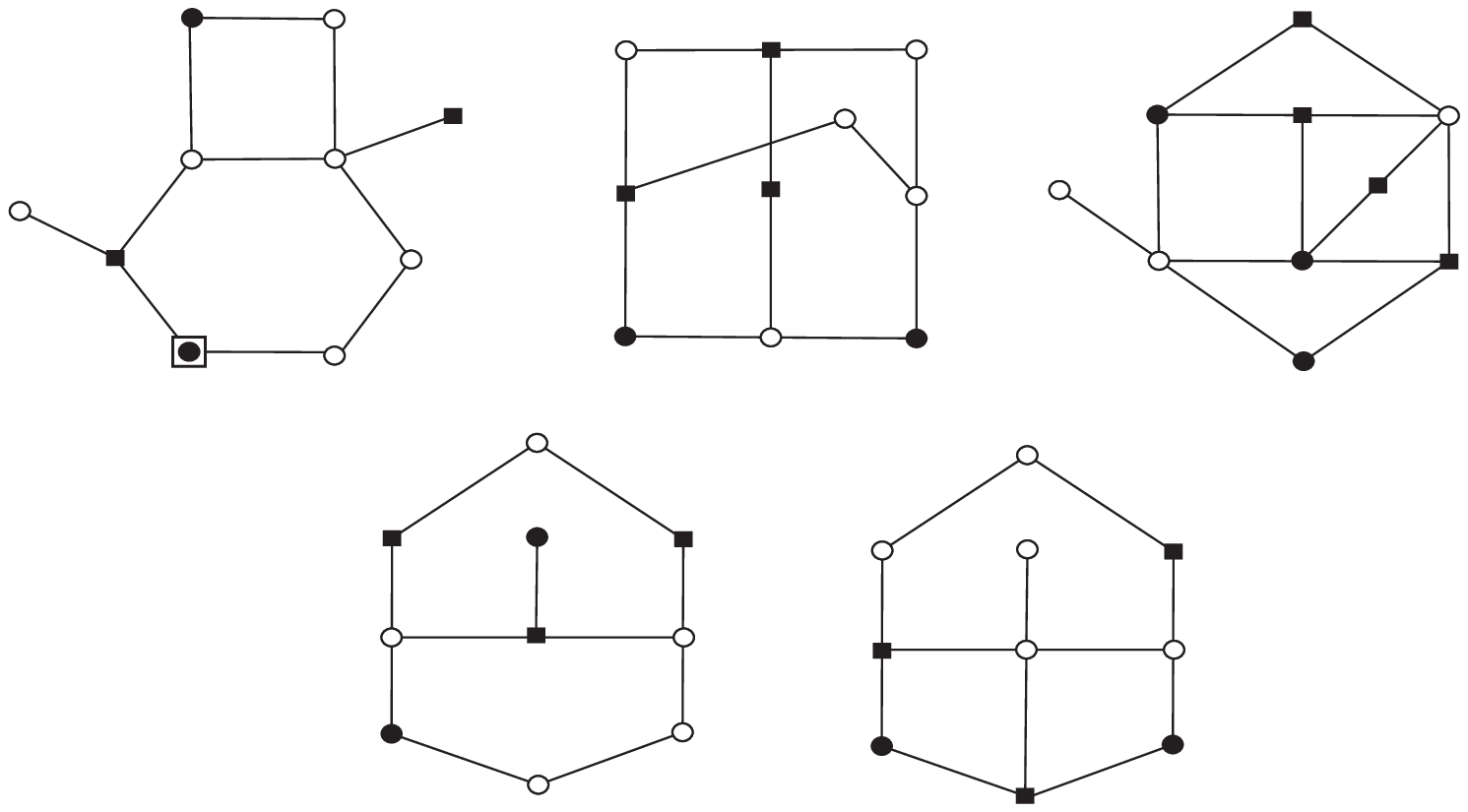}
\caption{The smallest graphs $G$ for which  $\edim(G)<\dim(G)$, different from $K_2$. The squared vertices form a metric basis and the circled bolded vertices form an edge metric basis.}
\label{fig:g10}
\end{figure}

The main contributions of our work are as follows.

\begin{theorem}
\label{th:main}
Let $k_1,k_2\ge 2$ and $k_1\ne k_2$. Then there is an integer $n_0$ such that for every $n\ge n_0$ there exists a graph on
$n$ vertices with $\dim(G)=k_1$ and $\edim(G)=k_2$.
\end{theorem}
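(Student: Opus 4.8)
The plan is to realize the prescribed pair $(k_1,k_2)$ by a single cactus graph (hence outerplanar) assembled from three kinds of pieces: a \emph{backbone}, a family of \emph{metric gadgets}, and a family of \emph{edge gadgets}. Concretely, I would take a long cycle $C$ as the backbone, so that the base object already satisfies the known values $\dim(C)=\edim(C)=2$, fix a collection of pairwise far-apart \emph{anchor} vertices on $C$, and attach there $a:=k_1-2$ copies of a metric gadget $D^{+}$ together with $b:=k_2-2$ copies of an edge gadget $D^{-}$. The theorem then reduces to three claims: (1) each $D^{+}$ raises $\dim$ by exactly one while leaving $\edim$ unchanged, and each $D^{-}$ does the reverse; (2) the effects of distinct gadgets are additive and non-interfering, so that $\dim(G)=2+a=k_1$ and $\edim(G)=2+b=k_2$; and (3) the order can be tuned to every $n\ge n_0$ without disturbing either parameter.

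The heart of the argument is the design of $D^{+}$ and $D^{-}$. The naive idea of attaching symmetric pendants (extra leaves, equal pairs of pendant paths, or pendant cycles) fails: any nontrivial automorphism forces the \emph{same} landmark to separate both a swapped vertex pair and the corresponding swapped edge pair, so symmetric attachments increase $\dim$ and $\edim$ by \emph{equal} amounts and cannot alter their difference. Hence I would seek \emph{rigid} gadgets. For $D^{+}$ the target is a small cactus piece that, once glued to the backbone at an anchor $b$, creates a pair of vertices whose distance vectors to every natural landmark coincide — forcing one additional gadget vertex into every metric basis — while all of its edges are already separated by the two backbone landmarks together with the ambient ones, so that $\edim$ is untouched. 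The gadget $D^{-}$ is the dual: it introduces a pair of edges that no ambient landmark can separate, forcing one extra vertex into every edge metric basis, while every vertex stays resolved. Each effect should be checked by a direct computation of the distance and edge-distance vectors restricted to the gadget, using that an anchor is a cut vertex, so distances decompose as $d(b,\cdot)$ plus the intra-gadget distance.

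With the gadgets in hand, the additivity and non-interference in claim (2) should follow from this cut-vertex structure: placing anchors pairwise far apart guarantees that the resolving obstruction created by one gadget is invisible to the others and to the backbone, so a basis must spend one dedicated landmark per gadget and the forced counts simply add to the two landmarks that $C$ already requires. For the padding in claim (3), I would lengthen the backbone by subdividing the arcs of $C$ between consecutive anchors; since $\dim(C_m)=\edim(C_m)=2$ for all $m$, and subdivision only increases distances uniformly while preserving the cut-vertex decomposition, neither parameter changes, and choosing the subdivision lengths lets us hit every sufficiently large $n$. Running this with $a=k_1-2$ and $b=k_2-2$ settles both regimes $k_1>k_2$ and $k_1<k_2$ at once; the excluded diagonal $k_1=k_2$ is not required here.

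The main obstacle will be the exact evaluation of both parameters, and in particular the \emph{lower} bounds: I must show not merely that the proposed sets resolve, but that no smaller set does, which for the edge metric dimension is delicate because a single landmark can separate many edge pairs simultaneously through the $\min$ in $d_G(uv,z)$. Establishing that a $D^{+}$ gadget genuinely leaves $\edim$ fixed — that the extra metric obstruction does not secretly obstruct some edge pair as well — and the dual statement for $D^{-}$, is exactly where the gadgets must be engineered most carefully, and is where rigidity is indispensable, since it is what prevents the vertex obstruction and the edge obstruction from collapsing into a single one resolvable by one landmark. By comparison, verifying that the assembled graph remains a cactus and handling the boundary values $k_1=2$ or $k_2=2$ (where one gadget family is empty) should be routine.
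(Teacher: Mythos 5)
Your high-level architecture (a base object plus gadgets that each increment exactly one of the two parameters, glued at cut vertices, with a padding mechanism to hit every large $n$) is essentially the architecture of the paper's proof. But your proposal stops exactly where the actual work begins: you never exhibit the gadgets $D^{+}$ and $D^{-}$, and you yourself flag their design and the lower-bound verification as ``where the gadgets must be engineered most carefully.'' That is the entire mathematical content of the theorem. The paper's answer to this is a single concrete gadget $G_{n_1,n_2,n_3}$: a cycle $a_1\dots a_{n_1}$ with a pendant path at $a_2$, a pendant vertex $c$ at $a_{n_1}$, and a pendant star ($i$ with $n_3$ leaves) at $a_1$. The mechanism that separates $\dim$ from $\edim$ is the \emph{parity of the cycle}: for $n_1$ odd one gets $\dim=n_3$ and $\edim=n_3+1$, and for $n_1$ even the values swap. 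The proof of this requires a genuine case analysis showing that no single vertex of the cycle-plus-path part can resolve all the tied pairs (of vertices in one parity, of edges in the other), and this is precisely the ``delicate lower bound'' you defer. Without a concrete gadget and that analysis, the proof is not there.

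Two further points in your plan would likely fail as stated. First, your padding mechanism --- subdividing arcs of the backbone cycle --- is exactly the wrong knob to turn: in constructions of this type the parity of the cycle length is what determines which of $\dim$ and $\edim$ is larger, so lengthening cycle arcs can flip the values rather than preserve them. The paper pads instead by lengthening a pendant \emph{path}, which is provably harmless. Second, your additivity claim (``far-apart anchors imply non-interference'') is not enough; the paper needs a precise gluing lemma (its Lemma~\ref{lem:operation}) whose hypotheses are structural, not metric: the attachment vertices must lie in a common metric and edge metric basis of each piece, and one of them must be a farthest vertex of its piece from another basis vertex. These hypotheses are what make the exact equalities $\dim(G)=\dim(G_1)+\dim(G_2)-2$ and $\edim(G)=\edim(G_1)+\edim(G_2)-2$ provable, including the lower bounds; distance between anchors plays no role. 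You would need to either prove such a lemma or replace it with something equally sharp before the ``forced counts simply add'' step can be accepted.
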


\begin{theorem}
\label{th:main-2}
The ratio $\frac{\dim(G)}{\edim(G)}$ is not bounded from above.
\end{theorem}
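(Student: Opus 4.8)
The plan is to obtain this unboundedness as an immediate consequence of the realization result in Theorem~\ref{th:main}, by freezing the edge metric dimension at its smallest admissible value while letting the metric dimension grow without bound. Concretely, for each integer $k\ge 3$ I would invoke Theorem~\ref{th:main} with $k_1=k$ and $k_2=2$. These satisfy the hypotheses ($k\ge 2$, $2\ge 2$, and $k\ne 2$), so there exists a graph $G_k$ with
\[
\dim(G_k)=k \qquad\text{and}\qquad \edim(G_k)=2 .
\]
Hence $\frac{\dim(G_k)}{\edim(G_k)}=\frac{k}{2}$, which tends to infinity as $k\to\infty$, showing that the ratio $\frac{\dim(G)}{\edim(G)}$ admits no upper bound. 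The only nontrivial ingredient is the existence of graphs with $\edim$ fixed at $2$ and $\dim$ arbitrarily large, which is precisely the $k_2=2$ slice of Theorem~\ref{th:main}; once that is granted, the key step is the one-line computation of the ratio above.

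It is worth noting why a naive self-contained construction is deceptive, so as to locate where the real work lies. The standard devices that force $\dim$ to be large---sets of mutual (true or false) twins, bundles of leaves at a common vertex, or symmetric theta-type gadgets---all force $\edim$ to be large by exactly the same amount: a leaf edge $v\ell$ behaves, with respect to every landmark other than $\ell$ itself, like the vertex $v$, so two leaves at the same vertex (or two parallel length-$2$ paths) demand a private landmark for both the vertex problem and the edge problem simultaneously. A genuine gap therefore has to come from the cyclic part of the graph, where edges carry orientation/positional information that few landmarks can read off; but the only known cyclic example, the torus $C_{4r}\Box C_{4t}$ from \cite{Kel}, yields merely the constant gap $\dim-\edim=4-3=1$, not a growing ratio.

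Thus the genuine obstacle is not in deriving Theorem~\ref{th:main-2}, but in engineering a scalable family in which a small, \emph{fixed} edge metric generator (of size $2$) resolves all edges while a large number of vertex pairs remain mutually unresolved by any two landmarks---and this is exactly what the cactus construction underlying Theorem~\ref{th:main} supplies. With that construction in hand, the proof of Theorem~\ref{th:main-2} reduces to the selection $k_2=2$, $k_1=k\to\infty$ and the elementary limit $\frac{k}{2}\to\infty$ displayed above.
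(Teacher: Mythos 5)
Your proposal is correct and takes essentially the same route as the paper: both derive the unboundedness as an immediate corollary of Theorem~\ref{th:main} by choosing parameters so that $\edim$ stays small while $\dim$ grows (the paper picks $\dim(G)=Nk$, $\edim(G)=k$; you fix $\edim(G_k)=2$ and let $\dim(G_k)=k\to\infty$). The extra discussion of why naive constructions fail is fine motivation but not needed for the argument.
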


\begin{proof}
By Theorem \ref{th:main}, for arbitrarily large $N$, it is always possible to find a graph $G$ such that $\dim(G) = Nk$ and $\edim(G)= k$. As such, $\frac{\dim(G)}{\edim{G}}$ can be made arbitrarily large.
\end{proof}

Notice that by using a similar argument as the one in the proof above, although it is already known from \cite{Zubrilina}, we can also prove that the ratio $\frac{\edim(G)}{\dim(G)}$ is not bounded from above.

\section{Proof of Theorem \ref{th:main}}

In order prove the main result of this work, we shall construct infinite families of graphs $G$ for which $\edim(G)<\dim(G)$ as well as other ones where $\dim(G)<\edim(G)$. To this end, we need first some preliminary results. We first describe two graphs that will be used in this purpose.
Take a cycle $C$ on $n_1$ vertices, where $n_1\ge 5$.
We denote the vertices of $C$ consecutively by $a_1,a_2,\dots, a_{n_1}$.
Further, take a path $P$ on $n_2$ vertices denoted consecutively by
$b_1,b_2,\dots,b_{n_2}$, where $n_2\ge 1$, and join $P$ to $C$ by the edge
$a_2b_1$.
Then take vertices $c$ and $i$ and connect them by edges to $a_{n_1}$ and
$a_1$, respectively.
Finally, take $n_3$ vertices $j_1,j_2,\dots,j_{n_3}$, where $n_3\ge 2$,
and join them by edges to the vertex $i$.
We denote the resulting graph by $G_{n_1,n_2,n_3}$. A fairly representative example of a graph as described above in drawn in Figure \ref{fig:G-7-3-2}.

\begin{figure}[h]
\centering
\includegraphics[width=40mm, height=45mm]{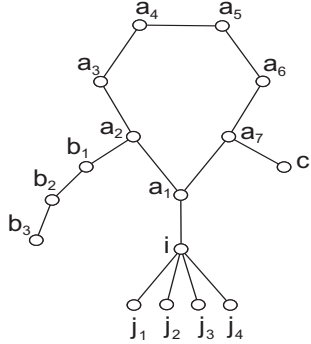}
\caption{The graph $G_{7,3,4}$}
\label{fig:G-7-3-2}
\end{figure}

In connection with the graphs $G_{n_1,n_2,n_3}$, we shall need a graph denoted by $G_{n_1,n_2}$ that is obtained from $G_{n_1,n_2,n_3}$ by removing the vertices of the set $\{i,j_1,j_2,\dots,j_{n_3}\}$ and the edges incident with them. Thus, $G_{n_1,n_2}$ is a proper subgraph of $G_{n_1,n_2,n_3}$.

\subsection{Preliminaries of the proof}

We now prove several auxiliary results about the graph $G_{n_1,n_2,n_3}$. We remark that we shall be using the graphs $G_{n_1,n_2}$ in most of the described situations.

\begin{observation}
\label{obs:lower}
Let $n_1\ge 5$, $n_2\ge 1$ and $n_3\ge 2$.
Then $\dim(G_{n_1,n_2,n_3})\ge n_3$ and $\edim(G_{n_1,n_2,n_3})\ge n_3$.
\end{observation}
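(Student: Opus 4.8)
The plan is to exploit the $n_3$ vertices $j_1,\dots,j_{n_3}$, all attached to the single vertex $i$, which form a family of mutually \emph{twin} leaves. First I would record the basic distance identity: since $i$ is the only neighbour of each $j_k$, for every vertex $u\neq j_k$ one has $d(u,j_k)=d(u,i)+1$. Consequently, for two distinct leaves $j_a,j_b$ and any vertex $z\notin\{j_a,j_b\}$ we get $d(z,j_a)=d(z,i)+1=d(z,j_b)$, so $z$ cannot resolve the pair $\{j_a,j_b\}$; only $j_a$ or $j_b$ itself can. Hence at most one leaf may be omitted, and any metric generator must contain at least $n_3-1$ of the $j_k$. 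The same reasoning works at the edge level: writing $e_a=ij_a$ and $e_b=ij_b$, for any $z\notin\{j_a,j_b\}$ one checks $d(e_a,z)=\min\{d(i,z),d(j_a,z)\}=d(i,z)=d(e_b,z)$, whereas $z=j_a$ gives $d(e_a,j_a)=0\neq 1=d(e_b,j_a)$; so again any edge metric generator must contain at least $n_3-1$ of the $j_k$.

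This already yields the weaker bounds $\dim(G)\ge n_3-1$ and $\edim(G)\ge n_3-1$, and the crux is to upgrade the ``$-1$'' away. I would argue by contradiction: suppose $S$ is an (edge) metric generator with $|S|=n_3-1$. By the previous paragraph $S$ must consist \emph{entirely} of the leaves, so after relabelling $S=\{j_2,\dots,j_{n_3}\}$. The key observation is that, for vertices or edges avoiding the leaf set, every $j_k\in S$ carries exactly the same information as the single vertex $i$: using the identity above, for any vertex $u\notin\{j_1,\dots,j_{n_3}\}$ we have $d(u,j_k)=d(u,i)+1$, and for any edge $e=uv$ with $u,v\notin\{j_1,\dots,j_{n_3}\}$ we have $d(e,j_k)=d(e,i)+1$. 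Thus $S$ resolves such a pair of vertices (resp. edges) if and only if $\{i\}$ alone does.

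Finally I would exhibit an $i$-equidistant pair on the cycle to force the contradiction. Since $i$ is adjacent to $a_1$, which on $C$ is adjacent to both $a_2$ and $a_{n_1}$, we have $d(i,a_2)=d(i,a_{n_1})=2$; as $n_1\ge 5$ the vertices $a_2$ and $a_{n_1}$ are distinct and lie outside the leaf set, so $\{i\}$ — and hence $S$ — fails to resolve them, proving $\dim(G)\ge n_3$. For the edge version the analogous witnesses are the two cycle edges $a_1a_2$ and $a_1a_{n_1}$, both at distance $1$ from $i$ and distinct because $a_2\neq a_{n_1}$, which $S$ cannot distinguish; this gives $\edim(G)\ge n_3$. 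The main obstacle is exactly this last step: the twin argument is routine and only delivers $n_3-1$, so the whole content lies in the reduction ``a set of leaves behaves like the single vertex $i$'' combined with the production of an $i$-equidistant pair, which crucially uses that $G$ contains the cycle $C$ and is therefore not a path.
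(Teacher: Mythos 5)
Your proof is correct and follows essentially the same route as the paper's: the twin-leaf argument forces $n_3-1$ of the vertices $j_1,\dots,j_{n_3}$ into any (edge) metric generator, and the pair $a_2,a_{n_1}$ (resp.\ the edge pair $a_1a_2,a_1a_{n_1}$), which is equidistant from $i$ and hence from every vertex outside the subgraph $G_{n_1,n_2}$, forces one additional vertex. The paper states this directly as ``$S$ must contain a vertex of $G_{n_1,n_2}$'' rather than via your contradiction, but the content is identical.
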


\begin{proof}
Let $S$ be a metric basis of $G_{n_1,n_2,n_3}$.
If $S$ does not contain a vertex of the subgraph $G_{n_1,n_2}$, then $r(a_2|S)=r(a_{n_1}|S)$.
Thus, $S$ contains at least one vertex of $G_{n_1,n_2}$.
Moreover, if $S$ does not contain two pendant vertices $j_r$ and $j_t$
attached to $i$, then $r(j_r|S)=r(j_t|S)$.
Thus, $S$ contains at least $n_3-1$ pendant vertices attached to $i$, and so, in conclusion we
get $\dim(G_{n_1,n_2,n_3})\ge n_3$.

Now let $T$ be an edge metric basis. Here the situation is analogous since
$r(a_1a_2|T)=r(a_1a_{n_1}|,T)$ if $T$ does not contain a vertex of
$G_{n_1,n_2}$, and $r(j_ri|T)=r(j_ti|T)$ if $j_r$ and $j_t$ are pendant
vertices attached to $i$ and not in $T$.
\end{proof}

\begin{lemma}
\label{lem:upper}
Let $n_1\ge 5$, $n_2\ge 1$ and $n_3\ge 2$.
Then $\dim(G_{n_1,n_2,n_3})\le n_3+1$ and $\edim(G_{n_1,n_2,n_3})\le n_3+1$.
\end{lemma}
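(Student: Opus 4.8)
The plan is to exhibit explicit generators of cardinality $n_3+1$ in both cases and to verify directly that they resolve every pair. For the metric dimension I would take
\[
S=\{j_1,j_2,\dots,j_{n_3-1}\}\cup\{c,b_{n_2}\},
\]
which has exactly $n_3+1$ vertices: the $n_3-1$ pendant vertices handle the star at $i$, while $c$ and $b_{n_2}$ are the two ``far'' endpoints used to pin down the cycle together with the path. For the edge metric dimension I would use the same set $S$, replacing distances to vertices by $d_G(e,z)=\min\{d(u,z),d(v,z)\}$ throughout.

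The verification splits into a few clean pieces. First, the pairs among $j_1,\dots,j_{n_3}$ are resolved, since every $j_k$ with $k\le n_3-1$ lies in $S$ (so $d(j_k,j_k)=0$ but $d(j_\ell,j_k)=2$ for $\ell\ne k$), which also separates the leftover leaf $j_{n_3}$ from all the others; the edge analogue is identical, using $d(j_k,ij_k)=0$ versus $d(j_k,ij_\ell)=1$. Second, for every vertex $v$ outside the star one has $d(j_k,v)=2+d(a_1,v)$, while $c$ and $b_{n_2}$ record, up to fixed additive constants, the distances to $a_{n_1}$ and $a_2$ (for vertices lying off the pendant and off the path). Hence on $G_{n_1,n_2}$ the set $S$ behaves like three reference points sitting at the three consecutive cycle vertices $a_{n_1},a_1,a_2$. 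The cycle itself is then resolved already by the adjacent pair $a_1,a_2$: two vertices equidistant from both would be simultaneous mirror images across the axes through $a_1$ and through $a_2$, forcing $0\equiv 2\pmod{n_1}$, impossible for $n_1\ge 5$; the same reflection argument handles the cycle edges.

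The one genuinely necessary role of the \emph{third} reference point is to break a collision between the path and the cycle, and this is the step I would treat most carefully. Using only the leaves and $c$, a path vertex $b_m$ and the cycle vertex $a_{2+m}$ turn out to share the representation $(3+m,3+m)$ in the $a_1$- and $a_{n_1}$-coordinates; the landmark $b_{n_2}$ separates them because $d(b_{n_2},b_m)=n_2-m$ whereas $d(b_{n_2},a_{2+m})=n_2+m$, and these differ for $m\ge 1$. This is exactly why $n_3+1$, rather than $n_3$, reference points are used. I expect the main obstacle to be precisely this exhaustive bookkeeping rather than any single hard idea: one must confirm that no further coincidence survives all three coordinates — path versus cycle in general, the special vertices $i,a_1,c$ and the special edges $ia_1,a_{n_1}c$ against everything else, and the edge versions of all of the above — and pin down exactly where the hypotheses $n_1\ge 5$, $n_2\ge 1$ and $n_3\ge 2$ are each invoked.
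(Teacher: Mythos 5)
Your proposed set works, but it is a genuinely different generator from the one in the paper, so the two proofs diverge in their details. The paper takes $S=\{j_1,\dots,j_{n_3-1},a_{\alpha},a_{\beta}\}$ with $\alpha=\lfloor\frac{n_1+1}{2}\rfloor$ and $\beta=\lceil\frac{n_1+3}{2}\rceil$, i.e.\ two cycle vertices near the antipode of $a_1$; it then partitions $V(G_{n_1,n_2})$ (resp.\ $E(G_{n_1,n_2,n_3})$) into level sets by distance from $a_1$ (these levels being what the leaves $j_k$ see) and checks that $a_\alpha,a_\beta$ separate within each level, using that distances to $C$ decrease while distances to $P$ increase. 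You instead take $\{c,b_{n_2}\}$, which act (up to additive constants, off their own pendant structures) as landmarks at $a_{n_1}$ and $a_2$; your reflection argument for the cycle ($p+q\equiv 2$ and $p+q\equiv 4 \pmod{n_1}$ cannot both hold for $n_1\ge 5$) is clean, and you correctly isolate the essential path-versus-cycle collision $b_m$ vs.\ $a_{m+2}$ that forces the third landmark. I verified that the remaining cases you defer also go through: in particular $j_{n_3}$ and $a_1$ share the coordinate $2$ with respect to every $j_k\in S$ but are split by $c$ (distances $4$ vs.\ $2$), and likewise the edges $ij_{n_3}$ and $ia_1$ are split by $c$ (distances $3$ vs.\ $2$) --- these are collisions your sketch does not name explicitly, so they must be added to make the argument complete. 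What your choice buys is a slicker separation of path from cycle ($b_{n_2}$ alone already distinguishes every path vertex/edge from every cycle vertex/edge, since the former get values below $n_2$ and the latter at least $n_2$); what the paper's choice buys is reusability: the vertices $a_\alpha,a_\beta$ are exactly the ones needed in Lemmas~\ref{prop:vertex} and~\ref{prop:edge} to show that a \emph{single} cycle vertex suffices for one parity of $n_1$ but not the other, so your set would not slot into the rest of Section~3 without reworking those later proofs.
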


\begin{proof}
Let $S=\{j_1,j_2,\dots,j_{n_3-1},a_{\alpha},a_{\beta}\}$, where
$\alpha=\lfloor\frac{n_1+1}2\rfloor$ and
$\beta=\lceil\frac{n_1+3}2\rceil$.
Observe that $d(a_1,a_{\alpha})=d(a_1,a_{\beta})$ since
$d(a_1,a_{\alpha})=\lfloor\frac{n_1+1}2\rfloor-1$ and
$d(a_1,a_{\beta})=n_1+1-\lceil\frac{n_1+3}2\rceil$.
Moreover, $d(a_{\alpha},a_{\beta})=1$ if $n_1$ is odd and
$d(a_{\alpha},a_{\beta})=2$ otherwise.
Obviously,  $S$ contains $n_3+1$ vertices.
And since $n_1\ge 5$, we have $\beta<n_1$.
Denote $\gamma=d(a_{\alpha},a_1)=\lfloor\frac{n_1-1}2\rfloor$.

First we show that $S$ is a metric generator of $G_{n_1,n_2,n_3}$.
Since $n_3\ge 2$, there is a vertex of $S$ outside the subgraph $G_{n_1,n_2}$.
This vertex distinguishes those vertices of $G_{n_1,n_2}$ which are at different distances
from $a_1$, but not those which are at the same distance from $a_1$.
According to the distance from $a_1$, the vertices of $G_{n_1,n_2}$ are partitioned into
nontrivial sets $\{a_2,a_{n_1}\}$, $\{a_3,a_{n_1-1},b_1,c\}$,
$\{a_4,a_{n_1-2},b_2\}$, etc.
Some of these sets are probably smaller since they do not need to contain
vertices of both $C$ and $P$, and $n_1$ could be even.

Let $\delta=\lfloor\frac{n_1}2\rfloor$.
For every $v\in V(G_{n_1,n_2})$, denote
$\tilde{r}(v)=(d(v,a_{\alpha}),d(v,a_{\beta}))$.
Then the vertices in the first set of the partition have
$\tilde{r}(a_2)=(\gamma-1,\delta)$
and $\tilde{r}(a_{n_1})=(\delta,\gamma-1)$.
Since $\gamma\le\delta$, these pairs are different.
Further, the pairs $\tilde{r}$ for vertices in the second set of the
partition are
$(\gamma-2,\delta-1)$, $(\delta-1,\gamma-2)$, $(\gamma,\delta+1)$,
$(\delta+1,\gamma)$, and they are pairwise distinct too.
Since the distances to vertices of $C$ decrease while the distances to
vertices of $P$ increase, it is obvious that the pair $a_{\alpha},a_{\beta}$
distinguishes the vertices in the sets of the partition.
Hence, $S$ identifies the vertices of $G_{n_1,n_2}$.

Since a pendant vertex in $S$ is identified trivially, it remains to check
the vertices $i$ and $j_{n_3}$.
Obviously, $j_{n_3}$ is the only vertex at distance $2$ from the pendant
vertices in $S$ and at distance $\gamma+2$ from $a_{\alpha}$.
On the other hand, $i$ is the only vertex at distance $1$ from the pendant
vertices in $S$.
Thus, $S$ is a metric generator of $G_{n_1,n_2,n_3}$.

Now we show that $S$ is an edge metric generator of $G_{n_1,n_2,n_3}$.
We proceed analogously as in the case for the metric generator.
According to the distance from $a_1$, the vertices of $S$ outside $G_{n_1,n_2}$ partition
the edges of $G$ into sets $\{a_1a_2,a_1a_{n_1}\}$,
$\{a_2a_3,a_{n_1}a_{n_1-1},a_2b_1,a_{n_1}c\}$,
$\{a_3a_4,a_{n_1-1}a_{n_1-2},b_1b_2\}$, etc.
Again, some of these sets are probably smaller since they do not need to contain the
edges of both $C$ and $P$, and $n_1$ may be odd.
For every $e\in E(G_{n_1,n_2,n_3})$, denote $\tilde{r}(v)=(d(e,a_{\alpha}),d(e,a_{\beta}))$.
Then the pairs $\tilde{r}$ for edges in the first set of the partition are
$(\gamma-1,\gamma)$ and $(\gamma,\gamma-1)$.
Further, the pairs $\tilde{r}$ for edges in the second set of the
partition are $(\gamma-2,\delta-1)$, $(\delta-1,\gamma-2)$, $(\gamma-1,\delta)$,
$(\delta,\gamma-1)$ and they are pairwise distinct too.
Since the distances to edges of $C$ decrease while the distances to edges
of $P$ increase, it is obvious that the pair $a_{\alpha},a_{\beta}$
distinguishes the edges in the sets of the partition.
Hence, $S$ identifies the edges of $G_{n_1,n_2}$.

Since a pendant edge $ij_t$ is identified trivially if $j_t\in S$, it
remains to check the edges $ia_0$ and $ij_{n_3}$.
Obviously, $ij_{n_3}$ is the only edge at distance $1$ from the pendant vertices
in $S$ and at distance $\gamma+1$ from $a_{\alpha}$.
On the other hand, $ia_1$ is the only edge at distance $1$ from the pendant
vertices in $S$ and at distance $\gamma$ from $a_{\alpha}$.
Therefore, $S$ is a metric generator of $G_{n_1,n_2,n_3}$ and the proof is completed.
\end{proof}

The next two propositions show that $\dim(G_{n_1,n_2,n_3})$ and $\edim(G_{n_1,n_2,n_3})$ depend on the parity of $n_1$.

\begin{lemma}
\label{prop:vertex}
Let $n_1\ge 5$, $n_2\ge 1$ and $n_3\ge 2$.
Then $\dim(G_{n_1,n_2,n_3})=n_3$ if $n_1$ is odd, and
$\dim(G_{n_1,n_2,n_3})=n_3+1$ if $n_1$ is even.
\end{lemma}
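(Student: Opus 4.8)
The plan is to prove the stated equality by combining the two general bounds already established with a refined parity analysis that pins down the exact value. By Observation~\ref{obs:lower} we already have $\dim(G_{n_1,n_2,n_3})\ge n_3$, and by Lemma~\ref{lem:upper} we have $\dim(G_{n_1,n_2,n_3})\le n_3+1$. Hence the only work remaining is to decide, according to the parity of $n_1$, which of the two values is attained. For the odd case I would exhibit a metric generator of size exactly $n_3$, and for the even case I would prove a sharper lower bound showing that no set of size $n_3$ can resolve all vertices.

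First I would handle the odd case $n_1$ odd, where the claim is $\dim=n_3$. I would propose the candidate set $S=\{j_1,\dots,j_{n_3-1},a_\alpha\}$ with $\alpha=\lfloor\frac{n_1+1}{2}\rfloor$, i.e. a single well-chosen cycle vertex together with $n_3-1$ of the pendants. The $n_3-1$ pendants trivially separate all the $j_t$ from each other and, together with the distance to $a_\alpha$, they pin down $i$ and $j_{n_3}$ as in the argument of Lemma~\ref{lem:upper}. The crux is that a single cycle vertex already resolves all of $G_{n_1,n_2}$ when $n_1$ is odd: because $a_\alpha$ is essentially antipodal on an odd cycle, the two directions around the cycle are at distinct distances from $a_\alpha$ for every vertex, so the ``symmetric'' pairs $\{a_2,a_{n_1}\}$, $\{a_3,a_{n_1-1}\}$, and so on, are separated by $a_\alpha$ alone. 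I would verify that the distances along the path $P$ (which only increase as one moves away from $a_2$) never collide with the cycle distances, completing the proof that $S$ is a metric generator and hence $\dim\le n_3$; combined with Observation~\ref{obs:lower} this gives equality.

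Next I would treat the even case $n_1$ even, where the claim is $\dim=n_3+1$. Since the upper bound $n_3+1$ is already in hand from Lemma~\ref{lem:upper}, the real content is the matching lower bound $\dim\ge n_3+1$. I would argue by contradiction: suppose a metric basis $S$ has $|S|=n_3$. By the reasoning in Observation~\ref{obs:lower}, $S$ must contain at least $n_3-1$ of the pendants $j_t$ and at least one vertex of $G_{n_1,n_2}$, which forces $S$ to consist of exactly $n_3-1$ pendants plus exactly one vertex $w$ of $G_{n_1,n_2}$. The key obstruction is that when $n_1$ is even the cycle has a genuine pair of antipodal vertices, and no single vertex $w$ of $G_{n_1,n_2}$ can resolve all the mirror pairs around the cycle simultaneously: for any choice of $w$ there will be two distinct vertices equidistant from $w$ (and equidistant from the common attachment point $a_1$, so that the pendants do not help either). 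I would make this precise by a short case check on the location of $w$ (on the cycle $C$, on the path $P$, or at $c$), showing that in each case some mirror pair $\{a_k,a_{n_1+2-k}\}$ or a pair involving $b_\ell$ and $c$ remains unresolved.

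The main obstacle I expect is the even-case lower bound, and specifically ruling out \emph{every} possible single vertex $w\in V(G_{n_1,n_2})$ cleanly rather than by brute force. The delicate point is that $w$ might lie on the path $P$ or be the vertex $c$, where the distance function is not symmetric about the antipode of the cycle, so I cannot simply invoke antipodality; I would need to track how distances from such a $w$ behave on the two arcs of the even cycle and confirm that an unresolved pair always survives. Organizing this as ``whatever single $w$ is chosen, the pendant-attachment vertex $a_1$ induces a reflection symmetry of $C$ that fixes $w$'s distance profile on at least one mirror pair'' should keep the argument uniform and avoid a long enumeration.
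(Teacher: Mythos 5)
Your proposal follows essentially the same route as the paper: in the odd case you exhibit the generator $\{j_1,\dots,j_{n_3-1},a_\alpha\}$ and use the near-antipodal position of $a_\alpha$ to separate the mirror pairs (checking the path and $c$ do not collide), and in the even case you prove the lower bound $n_3+1$ by showing, via a case analysis on the location of the single candidate vertex $w\in V(G_{n_1,n_2})$, that some pair equidistant from $a_1$ remains unresolved. This matches the paper's argument, including the delicate points you flag (the cases $w$ on $P$ or $w=c$), so no further comparison is needed.
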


\begin{proof}
First assume that $n_1$ is odd.
Analogously as in the proof of Lemma~{\ref{lem:upper}}, let
$\alpha=\lfloor\frac{n_1+1}2\rfloor$, $\gamma=\lfloor\frac{n_1-1}2\rfloor$
and $\delta=\lfloor\frac{n_1}2\rfloor$.
Since $n_1$ is odd, $\gamma=\delta$.
As shown in Observation~{\ref{obs:lower}}, every metric basis of $G_{n_1,n_2,n_3}$ contains a
vertex outside $G_{n_1,n_2}$.
By using the distances, this vertex partitions $V(G_{n_1,n_2})$ into nontrivial sets
$P_1=\{a_2,a_{n_1}\}$, $P_2=\{a_3,a_{n_1-1},b_1,c\}$,
$P_3\subseteq\{a_4,a_{n_1-2},b_2\}$, etc.
For every $v\in V(G_{n_1,n_2})$, we denote $\tilde{r}(v)=d(a_{\alpha},v)$.
Then the values of $\tilde{r}$ for the vertices in $P_1$ are $\gamma-1$ and
$\delta$, and they are different.
The values of $\tilde{r}$ for vertices in $P_2$ are $\gamma-2$, $\delta-1$,
$\gamma$ and $\delta+1$, and they are different too.
The set $P_3$ contains vertices with values of $\tilde{r}$ being $\gamma-3$,
$\delta-2$ and $\gamma+1$, etc.
Hence, to identify the vertices of $G_{n_1,n_2}$ it suffices (and is necessary by
the proof of Observation~{\ref{obs:lower}}) that a metric generator of $G_{n_1,n_2,n_3}$ will contain
only one vertex of $G_{n_1,n_2}$, namely $a_{\alpha}$.
Hence, $\{j_1,j-2,\dots,j_{n_3-1},a_{\alpha}\}$ is a metric generator of $G_{n_1,n_2,n_3}$,
see the proof of Lemma~{\ref{lem:upper}}.
By Observation~{\ref{obs:lower}}, we then get $\dim(G_{n_1,n_2,n_3})=n_3$.

We now assume $n_1$ is even and proceed analogously as above. Vertices outside $G_{n_1,n_2}$ partition $V(G_{n_1,n_2,n_3})$ into nontrivial
sets $P_1=\{a_2,a_{n_1}\}$, $P_2=\{a_3,a_{n_1-1},b_1,c\}$,
$P_3\subseteq\{a_4,a_{n_1-2},b_2\}$, etc.
We denote this partition by $\mathcal P$.
(Though it is not obvious, this partition is slightly different from the
partition for the case when $n_1$ is odd.)
We show that there is not a unique vertex in $G_{n_1,n_2}$ which distinguishes all the vertices inside
the sets of $\mathcal P$.
Let $v\in V(G_{n_1,n_2,n_3})$. By symmetry, it suffices to distinguish four cases:

\noindent
{\bf Case 1:} {\it $v\in\{a_1,a_{\frac{n_1+2}2}\}$.} Then $d(v,a_2)=d(v,a_{n_1})$ and $a_2,a_{n_1}\in P_1$.

\noindent
{\bf Case 2:} {\it $v\in\{a_2,b_1,b_2,\dots,b_{n_2}\}$.} Then $d(v,c)=d(v,a_{n_1-1})$ and $a_{n_1-1},c\in P_2$.

\noindent
{\bf Case 3:} {\it $v\in\{a_3,a_4,\dots,a_{\frac{n_1-2}2}\}$.} (This set is empty if $n_1=6$.)
Then again $d(v,c)=d(v,a_{n_1-1})$.

\noindent
{\bf Case 4:} {\it $v=a_{\frac{n_1}2}$.} Then $d(v,b_1)=d(v,a_{n_1-1})$ and $a_{n_1-1},b_1\in P_2$.

Since the other possibilities are symmetric, every metric basis of $G_{n_1,n_2,n_3}$ contains at least two vertices of $G_{n_1,n_2}$.
And since every metric basis of $G_{n_1,n_2,n_3}$ contains at least $n_3-1$ pendant vertices attached to $i$, we have $\dim(G_{n_1,n_2,n_3})\ge n_3+1$. Thus, by Lemma~{\ref{lem:upper}}, $\dim(G_{n_1,n_2,n_3})=n_3+1$.
\end{proof}

We now consider the counterpart of Lemma \ref{prop:vertex} for the edge metric dimension case.

\begin{lemma}
\label{prop:edge}
Let $n_1\ge 5$, $n_2\ge 1$ and $n_3\ge 2$.
Then $\edim(G_{n_1,n_2,n_3})=n_3+1$ if $n_1$ is odd, and
$\edim(G_{n_1,n_2,n_3})=n_3$ if $n_1$ is even.
\end{lemma}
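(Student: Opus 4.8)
The plan is to run the same skeleton as in the proof of Lemma~\ref{prop:vertex}, but with the two parities interchanged, which is exactly what the statement predicts. By Observation~\ref{obs:lower} and Lemma~\ref{lem:upper} one already has $n_3\le\edim(G_{n_1,n_2,n_3})\le n_3+1$, so it remains only to decide, according to the parity of $n_1$, how many vertices of the subgraph $G_{n_1,n_2}$ are needed inside an edge metric generator. As in the proof of Lemma~\ref{lem:upper}, a vertex $i$ or $j_t$ lying outside $G_{n_1,n_2}$ distinguishes two edges of $G_{n_1,n_2}$ exactly when they lie at different distances from $a_1$ (since every shortest path to $i$ or to a $j_t$ runs through $a_1$); such vertices therefore only split the edges into the level sets $L_0=\{a_1a_2,a_1a_{n_1}\}$, $L_1=\{a_2a_3,a_{n_1-1}a_{n_1},a_2b_1,a_{n_1}c\}$, $L_2=\{a_3a_4,a_{n_1-2}a_{n_1-1},b_1b_2\}$, and so on, and only a vertex of $G_{n_1,n_2}$ can distinguish two edges lying inside the same $L_\ell$. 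So everything reduces to the question: how many vertices of $G_{n_1,n_2}$ are needed to separate the edges inside every level set $L_\ell$?

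For $n_1$ even I would show that a single vertex suffices, namely $a_\alpha$ with $\alpha=\lfloor\frac{n_1+1}2\rfloor$ (writing $n_1=2m$ this is $a_m$). Setting $\tilde r(e)=d(e,a_\alpha)$, a short computation shows that inside each $L_\ell$ the forward and backward cycle edges receive values differing by exactly $2$, while the remaining edges of $L_\ell$ (at most one path edge, together with $a_2b_1$ and $a_{n_1}c$ when $\ell=1$) receive values distinct from these and from one another; for instance at level $1$ one gets $\tilde r(a_2a_3)=m-3$, $\tilde r(a_2b_1)=m-2$, $\tilde r(a_{n_1-1}a_{n_1})=m-1$ and $\tilde r(a_{n_1}c)=m$. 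Hence $a_\alpha$ separates the edges inside every level set, and treating the two remaining edges $ia_1$ and $ij_{n_3}$ exactly as in the proof of Lemma~\ref{lem:upper} shows that $\{j_1,\dots,j_{n_3-1},a_\alpha\}$ is an edge metric generator. With Observation~\ref{obs:lower} this gives $\edim(G_{n_1,n_2,n_3})=n_3$.

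For $n_1$ odd the key point is the opposite: no single vertex of $G_{n_1,n_2}$ separates all edges inside the level sets. Granting this, since $i$ and the $j_t$ cannot distinguish edges inside a fixed $L_\ell$, any edge metric basis must contain at least two vertices of $G_{n_1,n_2}$ in addition to the $n_3-1$ pendant vertices forced by Observation~\ref{obs:lower}, so $\edim(G_{n_1,n_2,n_3})\ge n_3+1$, and Lemma~\ref{lem:upper} gives equality. To prove the key point I would run a case analysis on the position of a hypothetical single separating vertex $v$, exhibiting in each case two edges of the same level set that $v$ fails to distinguish. Writing $n_1=2m+1$: if $v=a_1$ then already $L_0$ fails, since $d(a_1a_2,a_1)=d(a_1a_{n_1},a_1)=0$. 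For every other $v$ the obstruction sits in $L_1$, and here I would use that $c$ hangs off $a_{n_1}$ and $b_1$ off $a_2$, so that $d(a_{n_1}c,v)=d(a_{n_1},v)$ and $d(a_2b_1,v)=d(a_2,v)$ whenever $v$ avoids $c$ and the path. One then checks: for $v\in\{a_2,\dots,a_m\}$ and for $v$ on the path, $d(a_{n_1-1},v)\ge d(a_{n_1},v)$, so the pair $\{a_{n_1-1}a_{n_1},a_{n_1}c\}$ collapses to the common value $d(a_{n_1},v)$; symmetrically, for $v\in\{a_{m+3},\dots,a_{n_1}\}$ and for $v=c$ the pair $\{a_2a_3,a_2b_1\}$ collapses; and for the two antipodal vertices $a_{m+1}$ and $a_{m+2}$ the crossed pairs $\{a_{n_1-1}a_{n_1},a_2b_1\}$ and $\{a_2a_3,a_{n_1}c\}$ collapse, respectively. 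These cases exhaust the vertices of $G_{n_1,n_2}$.

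I expect this last case analysis to be the main obstacle. The natural temptation is to invoke the reflection of the cycle fixing $a_1$, which for odd $n_1$ swaps the two cycle edges of each level set; but this reflection is \emph{not} an automorphism of $G_{n_1,n_2,n_3}$, because it would have to interchange the path $P$ (attached at $a_2$) with the single vertex $c$ (attached at $a_{n_1}$). It is precisely this broken symmetry that makes $a_2b_1$ and $a_{n_1}c$ behave as two ``extra'' edges of $L_1$ and defeats every single separating vertex, so the argument must track the path and the vertex $c$ by hand rather than appeal to symmetry. The same analysis covers the boundary case $n_1=5$, where the forward and backward ranges shrink to the single vertices $a_2$ and $a_{n_1}$ while the antipodal and level-$0$ cases are unchanged.
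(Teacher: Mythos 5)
Your proposal is correct and follows essentially the same route as the paper's proof: the level-set partition induced by vertices outside $G_{n_1,n_2}$, an explicit computation showing $a_\alpha$ alone separates each level set when $n_1$ is even, and a case analysis on a hypothetical single separating vertex when $n_1$ is odd. Your treatment is in fact slightly more careful than the paper's, since you spell out the ``symmetric'' cases (which the paper dismisses even though the reflection fixing $a_1$ is not an automorphism) and explicitly cover both antipodal vertices $a_{m+1}$ and $a_{m+2}$.
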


\begin{proof}
First assume that $n_1$ is odd. Since $n_3\ge 2$, every edge metric basis contains a vertex outside of $G_{n_1,n_2}$, and by
using distances, this vertex partitions $E(G_{n_1,n_2,n_3})$ into sets $P_1=\{a_1a_2,a_1a_{n_1}\}$, $P_2=\{a_2a_3,a_{n_1}a_{n_1-1},a_2b_1,a_{n_1}c\}$, $P_3\subseteq\{a_3a_4,a_{n_1-1}a_{n_1-2},b_1b_2\}$, etc.
We show that there is no vertex in $G_{n_1,n_2,n_3}$ which distinguishes edges inside
these sets. Let $v\in V(G_{n_1,n_2,n_3})$. By symmetry, it suffices to distinguish the following four situations.

\noindent
{\bf Case 1:} {\it $v=a_1$.} Then $d(v,a_1a_2)=d(v,a_1a_{n_1})$ and $a_1a_2,a_1a_{n_1}\in P_1$.

\noindent
{\bf Case 2:} {\it $v\in\{a_2,b_1,b_2,\dots,b_{n_2}\}$.}
Then $d(v,a_{n_1}a_{n_1-1})=d(v,a_{n_1}c)$ and $a_{n_1}a_{n_1-1},a_{n_1}c\in P_2$.

\noindent
{\bf Case 3:} {\it $v\in\{a_3,a_4,\dots,a_{\frac{n_1-1}2}\}$.}
(This set is empty if $n_1=5$.)
Then again $d(v,a_{n_1}a_{n_1-1})=d(v,a_{n_1}c)$.

\noindent
{\bf Case 4:} {\it $v=a_{\frac{n_1+1}2}$.}
Then $d(v,a_{n_1}a_{n_1-1})=d(v,a_2b_1)$ and $a_{n_1}a_{n_1-1},a_2b_1\in P_2$.

Since the other possibilities are symmetric, every edge metric basis of
$G_{n_1,n_2,n_3}$ contains at least two vertices of $G_{n_1,n_2}$.
And since also every edge metric basis of $G_{n_1,n_2,n_3}$ contains at least $n_3-1$ pendant
vertices attached to $i$, we have $\edim(G_{n_1,n_2,n_3})\ge n_3+1$.
By Lemma~{\ref{lem:upper}}, we then have $\edim(G_{n_1,n_2,n_3})=n_3+1$.

Now assume that $n_1$ is even. Analogously to the proof of Lemma~{\ref{lem:upper}}, let
$\alpha=\lfloor\frac{n_1+1}2\rfloor$, $\gamma=\lfloor\frac{n_1-1}2\rfloor$
and $\delta=\lfloor\frac{n_1}2\rfloor$.
Since $n_1$ is even, $\gamma=\delta-1$.
The vertices of an edge metric basis outside $G_{n_1,n_2}$ partition the edges of $G_{n_1,n_2,n_3}$
into sets $P_1=\{a_1a_2,a_1a_{n_1}\}$, $P_2=\{a_2a_3,a_{n_1}a_{n_1-1},a_2b_1,a_{n_1}c\}$,
$P_3\subseteq\{a_3a_4,a_{n_1-1}a_{n_1-2},b_1b_2\}$, etc.
For every $e\in E(G_{n_1,n_2,n_3})$, let us denote $\tilde{r}(e)=d(a_{\alpha},e)$.
Then the values of $\tilde{r}$ for the edges in $P_1$ are $\gamma-1$ and
$\gamma$ and they are different.
The values of $\tilde{r}$ for edges in $P_2$ are
$\gamma-2$, $\delta-1$, $\gamma-1$ and $\delta$ and they are different too.
The values of $\tilde{r}$ for edges in $P_3$ are $\gamma-3$, $\delta-2$ and $\gamma$, etc.
Hence, in order to identify the edges of $G_{n_1,n_2,n_3}$, it suffices (and it is indeed necessary by
the proof of Observation~{\ref{obs:lower}}) that an edge metric generator will contain only one vertex of $G_{n_1,n_2}$, namely $a_{\alpha}$.
Hence, the set $\{j_1,j_2,\dots,j_{n_3-1},a_{\alpha}\}$ is an edge metric generator of $G_{n_1,n_2,n_3}$, see the proof of Lemma~{\ref{lem:upper}}. Therefore, by Observation~{\ref{obs:lower}}, we get $\edim(G_{n_1,n_2,n_3})=n_3$.
\end{proof}

\subsection{Core of the proof}

To obtain the required graphs we are searching for, we will connect several copies of the graph $G_{n_1,n_2,n_3}$ by
adding a few edges. To this end, we need the following powerful tool.

\begin{lemma}
\label{lem:operation}
Let $G_1$ and $G_2$ be two graphs which are not paths, such that for any $i\in \{1,2\}$,
the graph $G_i$ contains a metric basis $S_i$ and an edge metric basis $T_i$ satisfying
the following conditions.
\begin{itemize}
\item[{\rm (1)}]
There is $v_1\in S_1\cap T_1$.
\item[{\rm (2)}]
There are $v_2,u_2\in S_2\cap T_2$ such that
$d_{G_2}(u_2,v_2)\ge d_{G_2}(u_2,z)$ for every $z\in V(G_2)$.
\end{itemize}
Let $G$ be a graph obtained by adding the edge $v_1v_2$ to the disjoint union
of the graphs $G_1$ and $G_2$.
Then, $\dim(G)=\dim(G_1)+\dim(G_2)-2$ and $\edim(G)=\edim(G_1)+\edim(G_2)-2$.
Moreover, $S=S_1\cup S_2-\{v_1,v_2\}$ is a metric basis of $G$
and $T=T_1\cup T_2-\{v_1,v_2\}$ is an edge metric basis of $G$.
\end{lemma}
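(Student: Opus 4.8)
The plan is to exploit that the added edge $v_1v_2$ is a cut-edge of $G$, so the metric structure of $G$ is governed by a few clean distance identities that I would record first. For $x\in V(G_1)$ and $y\in V(G_2)$ every geodesic uses the bridge, giving $d_G(x,y)=d_{G_1}(x,v_1)+1+d_{G_2}(v_2,y)$, whereas distances between two vertices of the same factor $G_i$ are unchanged. The edge--vertex analogues are $d_G(e,s)=d_{G_1}(e,v_1)+1+d_{G_2}(v_2,s)$ for $e\in E(G_1)$, $s\in V(G_2)$ (and symmetrically), together with $d_G(v_1v_2,s)=d_{G_1}(v_1,s)$ for $s\in V(G_1)$ and $d_G(v_1v_2,s)=d_{G_2}(v_2,s)$ for $s\in V(G_2)$. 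These identities are the backbone of everything that follows.

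For the upper bounds I would show that $S=S_1\cup S_2-\{v_1,v_2\}$ is a metric generator and $T=T_1\cup T_2-\{v_1,v_2\}$ an edge metric generator. Write $S_i'=S_i-\{v_i\}$ and $T_i'=T_i-\{v_i\}$; since $G_1,G_2$ are not paths we have $\dim(G_i),\edim(G_i)\ge 2$, so each of these four sets is nonempty. For a pair of vertices both lying in $G_1$, if some element of $S_1'$ separates them in $G_1$ the same vertex works in $G$; if only $v_1$ separated them (here I use $v_1\in S_1$ from condition~(1)), then $d_{G_1}(x,v_1)\ne d_{G_1}(x',v_1)$, and by the cross-distance identity any $s\in S_2'$ separates them in $G$. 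The symmetric statement handles pairs inside $G_2$ using $v_2\in S_2$ from condition~(2), and the edge pairs of type ``both in $G_1$'' or ``both in $G_2$'' are identical with $d_{G_1}(\cdot,v_1)$ and $d_{G_2}(\cdot,v_2)$ replacing the vertex distances.

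The only genuinely delicate pairs are the \emph{cross} pairs, and this is precisely what hypothesis~(2) is engineered for. Setting $D=d_{G_2}(u_2,v_2)$, the maximality in~(2) yields $d_{G_2}(u_2,z)\le D$ for all $z\in V(G_2)$, so every $y\in V(G_2)$ has $d_G(y,u_2)\le D$, while every $x\in V(G_1)$ has $d_G(x,u_2)=d_{G_1}(x,v_1)+1+D\ge D+1$. As $u_2\ne v_2$ (otherwise $G_2$ would be a single vertex), we have $u_2\in S_2'\cap T_2'$, so this one vertex simultaneously separates every $G_1$-vertex from every $G_2$-vertex; the same inequality, applied to $d_{G_1}(e,v_1)\ge 0$, shows $u_2$ also separates every $G_1$-edge from every $G_2$-edge and separates the bridge $v_1v_2$ (at distance $D$ from $u_2$) from every edge of $G_1$. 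Finally the bridge against a $G_2$-edge $e$ is separated by any $s\in T_1'$ via $d_G(e,s)\ge 1+d_{G_1}(v_1,s)>d_{G_1}(v_1,s)=d_G(v_1v_2,s)$. This establishes $\dim(G)\le\dim(G_1)+\dim(G_2)-2$ and $\edim(G)\le\edim(G_1)+\edim(G_2)-2$.

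For the matching lower bounds I would take an arbitrary (edge) metric generator $W$ of $G$, split it as $W_i=W\cap V(G_i)$, and prove that $W_i\cup\{v_i\}$ is an (edge) metric generator of $G_i$: a separating element already in $W_i$ works verbatim, while a separating element in the opposite factor contributes only the quantity $d_{G_i}(\cdot,v_i)$ to a same-factor pair, so its separating power is reproduced by $v_i$ itself. Hence $|W_i|\ge\dim(G_i)-1$ (resp.\ $\edim(G_i)-1$), and summing gives the desired lower bounds; combined with the upper bounds this forces equality and shows $S$ and $T$ are optimal. I expect the cross-pair analysis of the upper bound to be the main obstacle: the rest is routine manipulation of the cut-edge distance formulas, but bounding the two sides apart with the available vertices is only possible because of the eccentricity condition~(2), and the key realization is that the single vertex $u_2$ accomplishes all of this separation at once.
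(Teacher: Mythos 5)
Your proposal is correct and follows essentially the same route as the paper: reduce same-factor pairs to the basis of each factor plus any vertex across the bridge, use the eccentricity condition on $u_2$ to separate all cross pairs (and the bridge edge) at once, and obtain the lower bound by showing that $W_i\cup\{v_i\}$ resolves $G_i$ for any generator $W$ of $G$. You are somewhat more explicit than the paper about the cut-edge distance identities and the bridge-edge cases, but the argument is the same.
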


\begin{proof}
First observe that for every $z_2\in V(G_2)$, the set
$\big(S_1-\{v_1\}\big)\cup\{z_2\}$ identifies the vertices of $G_1$.
Analogously, if $z_1\in V(G_1)$, then the set
$\big(S_2-\{v_2\}\big)\cup\{z_1\}$ identifies the vertices of $G_2$.
Since $G_1$ and $G_2$ are not paths, $|S_1|\ge 2$ and $|S_2|\ge 2$.
Hence, the set $S=S_1\cup S_2-\{v_1,v_2\}$ identifies the vertices of $G_1$ and it also identifies
the vertices of $G_2$.
Thus, to conclude that $S$ is a metric generator, we just may need to consider a pair of vertices $x,y$ with $x\in V(G_1)$ and $y\in V(G_2)$.

By (2), $d_{G_2}(u_2,v_2)\ge d_{G_2}(u_2,z)$ for every $z\in V(G_2)$.
Hence, for $y\in V(G_2)$, we have $d_G(u_2,v_2)\ge d_G(u_2,y)$,
while for $x\in V(G_1)$ it follows $d_G(u_2,x) \ge d_G(u_2,v_2)+1$. Thus, clearly $d_G(u_2,x)>d_G(u_2,y)$.
Since $u_2\in S$, we conclude that $S$ identifies all the vertices of $G$, and so it
is a metric generator for $G$.

Now suppose that $S'$ is a metric generator for $G$ such that $|S'|<|S|$.
Then either $|S'\cap V(G_1)|<|S_1|-1$ or $|S'\cap V(G_2)|<|S_2|-1$.
In the first case $(S'\cap V(G_1))\cup\{v_1\}$ identifies $G_1$ which
contradicts $\dim(G_1)=|S_1|$, while in the second case
$(S'\cap V(G_2))\cup\{v_2\}$ identifies $G_2$ which contradicts
$\dim(G_2)=|S_2|$.
Consequently, such a set $S'$ does not exist, and we obtain that $S$ is a metric basis for $G$, which means
$\dim(G)=\dim(G_1)+\dim(G_2)-2$.

The situation for the edge metric basis is analogous.
The only difference comes by noticing that we may need to consider the edge metric representation of the edge
$v_1v_2$, that is $r(v_1v_2|S)$, but this is unique in $G$.
Observe that if $z_1\in T_1-\{v_1\}$ and $z_2\in T_2-\{v_2\}$,
then $d_G(z_1,v_1v_2)<d_G(z_1,e_2)$ and $d_G(z_2,v_1v_2)<d_G(z_2,e_1)$ for every
$e_1\in E(G_1)$ and $e_2\in E(G_2)$.
\end{proof}


\subsection{Conclusion of the proof}

In order to complete the proof of Theorem \ref{th:main}, we make the following
construction, that uses Lemma \ref{lem:operation}.

For some positive integer $\ell\ge 2$, we consider $\ell$ graphs given as follows.
Let $G_1=G_{n_1,n_2,n_3}$, and let $G_2=G_3=\dots=G_{\ell}=G_{n_1,1,2}$,
with $n_1\ge 5$, $n_2\ge 1$ and $n_3\ge 2$.
To distinguish vertices in distinct copies of $G_i$, if $x$ is a vertex in $G_k$,
$1\le k\le\ell$, then we denote it by $x^k$.
Let $\alpha=\lfloor\frac{n_1+1}2\rfloor$.
Then $L^{\ell}_{n_1,n_2,n_3}$ is a graph obtained from the disjoint union
$G_1\cup G_2\cup\dots\cup G_{\ell}$ by adding the edges $a^1_{\alpha}j^2_1$,
$a^2_{\alpha}j^3_1$, \dots, $a^{\ell-1}_{\alpha}j^{\ell}_1$. See Figure \ref{fig-L-ell} for an example.

\begin{figure}[ht]
\centering
\includegraphics[width=140mm, height=30mm]{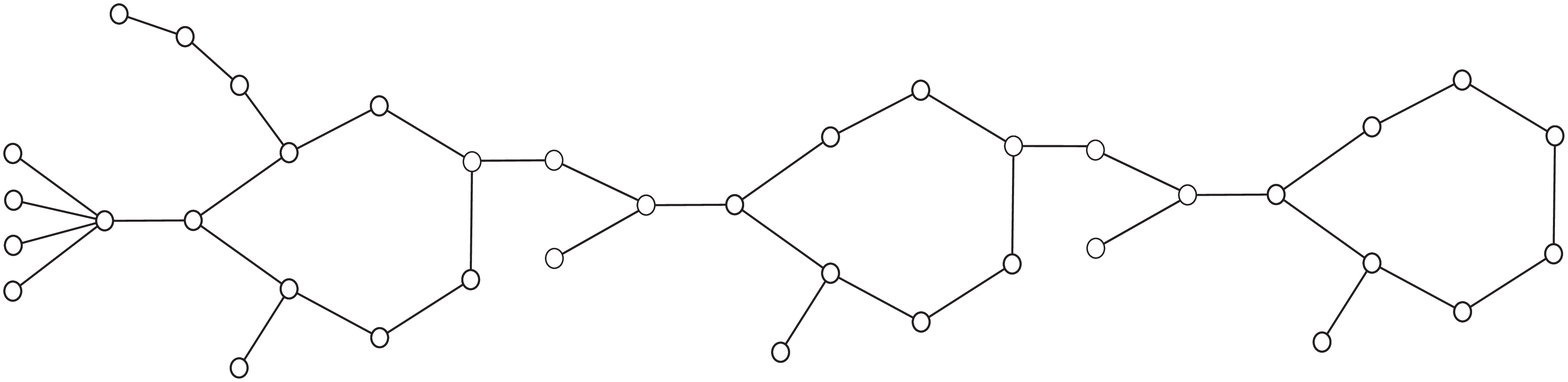}
\caption{The graph $L^3_{7,3,4}$. Note that $G_1$ is the graph of Figure \ref{fig:G-7-3-2}.}
\label{fig-L-ell}
\end{figure}

Obviously, $L^{\ell}_{n_1,n_2,n_3}$ is a connected graph, and if $\ell=1$, then $L^{\ell}_{n_1,n_2,n_3}$ is just $G_{n_1,n_2,n_3}$.
We next give some results concerning the metric and edge metric dimensions of such graphs.

\begin{lemma}
\label{thm:L}
Let $n_1\ge 5$, $n_2\ge 1$, $n_3\ge 2$ and $\ell\ge 1$.
Then the following holds.
\begin{itemize}
\item[{\rm (1)}]
If $n_1$ is odd, then $\dim(L^{\ell}_{n_1,n_2,n_3})=n_3$ and
$\edim(L^{\ell}_{n_1,n_2,n_3})=n_3+\ell$.
\item[{\rm (2)}]
If $n_1$ is even, then $\dim(L^{\ell}_{n_1,n_2,n_3})=n_3+\ell$ and
$\edim(L^{\ell}_{n_1,n_2,n_3})=n_3$.
\end{itemize}
\end{lemma}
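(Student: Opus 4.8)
The plan is to prove both items of Lemma~\ref{thm:L} simultaneously by induction on $\ell$, building $L^\ell_{n_1,n_2,n_3}$ one copy at a time and applying the gluing operation of Lemma~\ref{lem:operation} at each stage. Write $H_1=G_1=G_{n_1,n_2,n_3}$ and, for $2\le k\le\ell$, let $H_k$ be the graph obtained from $H_{k-1}$ and a fresh copy $G_k=G_{n_1,1,2}$ by adding the edge $a^{k-1}_\alpha j^k_1$; then $H_\ell=L^\ell_{n_1,n_2,n_3}$, so it suffices to track $\dim$ and $\edim$ along this chain. The base case $\ell=1$ is exactly Lemmas~\ref{prop:vertex} and \ref{prop:edge}. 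Throughout, I would fix the explicit bases furnished by Lemmas~\ref{lem:upper}, \ref{prop:vertex} and \ref{prop:edge}: in every parity of $n_1$ the metric basis and the edge metric basis of $G_{n_1,n_2,n_3}$ (and of $G_{n_1,1,2}$) contain $a_\alpha$, and since $n_3\ge2$ they also contain $j_1$.

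For the inductive step I apply Lemma~\ref{lem:operation} with its $G_1:=H_{k-1}$ and $G_2:=G_k$, joined along $v_1v_2$ where $v_1=a^{k-1}_\alpha$ and $v_2=j^k_1$, taking $u_2=a^k_\alpha$. Neither $H_{k-1}$ nor $G_k$ is a path (both contain a cycle), so the standing hypothesis of the lemma holds. Condition~(1) follows from the induction hypothesis that $a^{k-1}_\alpha$ lies in both the metric basis $S_{H_{k-1}}$ and the edge metric basis $T_{H_{k-1}}$. Condition~(2) requires $v_2=j^k_1$ and $u_2=a^k_\alpha$ to lie in the bases of $G_k$ (which they do) and, crucially, that $j^k_1$ be a farthest vertex from $a^k_\alpha$ inside $G_k$.

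This last requirement is the one genuine computation and the point I expect to be the main obstacle, though a mild one; it also explains why the copies $G_2,\dots,G_\ell$ are chosen with $n_2=1$. A direct distance count in $G_{n_1,1,2}$, with $\gamma=\lfloor\frac{n_1-1}2\rfloor$, shows that every cycle vertex lies within distance $\gamma+1$ of $a_\alpha$, that $b_1$ is at distance at most $\gamma$ and $i$ at distance $\gamma+1$, that $c$ is at distance at most $\gamma+2$, and that the pendants $j_1,j_2$ are at distance exactly $\gamma+2$; hence $d(a_\alpha,j_1)=\gamma+2\ge d(a_\alpha,z)$ for every $z$, which is precisely condition~(2). (A longer attached path would let some $b$-vertex overtake $j_1$, so this step genuinely uses $n_2=1$ in the glued copies.) I would also record that after each gluing the bases supplied by Lemma~\ref{lem:operation} are $S_{H_k}=S_{H_{k-1}}\cup S_{G_k}-\{a^{k-1}_\alpha,j^k_1\}$ and $T_{H_k}=T_{H_{k-1}}\cup T_{G_k}-\{a^{k-1}_\alpha,j^k_1\}$; since only $a^{k-1}_\alpha$ and $j^k_1$ are deleted while the retained vertex $u_2=a^k_\alpha$ survives, $a^k_\alpha$ remains in both bases of $H_k$, which is exactly what the next step of the induction needs.

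Finally I tally the dimensions using $\dim(H_k)=\dim(H_{k-1})+\dim(G_k)-2$ and $\edim(H_k)=\edim(H_{k-1})+\edim(G_k)-2$ from Lemma~\ref{lem:operation}. If $n_1$ is odd then $\dim(G_{n_1,1,2})=2$ and $\edim(G_{n_1,1,2})=3$, so each of the $\ell-1$ gluings leaves $\dim$ unchanged and raises $\edim$ by $1$; starting from $\dim(G_1)=n_3$ and $\edim(G_1)=n_3+1$ this gives $\dim(L^\ell_{n_1,n_2,n_3})=n_3$ and $\edim(L^\ell_{n_1,n_2,n_3})=n_3+\ell$. If $n_1$ is even the roles swap ($\dim(G_{n_1,1,2})=3$, $\edim(G_{n_1,1,2})=2$, $\dim(G_1)=n_3+1$, $\edim(G_1)=n_3$), so each gluing raises $\dim$ by $1$ and leaves $\edim$ fixed, yielding $\dim=n_3+\ell$ and $\edim=n_3$. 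This matches both claimed formulas and completes the induction.
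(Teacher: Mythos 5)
Your proposal is correct and follows essentially the same route as the paper: iterated application of Lemma~\ref{lem:operation} along the chain of copies, using the explicit bases containing $a_\alpha$ and $j_1$ from Lemmas~\ref{prop:vertex} and~\ref{prop:edge}, with the farthest-vertex condition $d(a_\alpha,j_1)=\gamma+2$ checked in $G_{n_1,1,2}$. Your write-up is in fact slightly more explicit than the paper's (which asserts the distance condition without computation and handles only the odd case in detail), but the argument is the same.
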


\begin{proof}
We only prove the result for the case when $n_1$ is odd, since the proof for the case
when $n_1$ is even is in fact the same.

Let $\beta=\lceil\frac{n_1+3}2\rceil$.
As shown in the proof of Lemma~{\ref{prop:vertex}},
$\dim(G_{n_1,n_2,n_3})=n_3$ and $S=\{j_1,j_2,\dots,j_{n_3-1},a_{\alpha}\}$ is a
metric basis of $G_{n_1,n_2,n_3}$.
By Lemma~{\ref{prop:edge}}, $\edim(G_{n_1,n_2,n_3})=n_3+1$ and
$T=\{j_1,j_2,\dots,j_{n_3-1},a_{\alpha},a_{\beta}\}$ is an edge
metric basis of $G_{n_1,n_2,n_3}$. We first consider the graphs $G_1$ and $G_2$, in concordance with the construction of $L^{\ell}_{n_1,n_2,n_3}$. Now, for $i\in \{1,2\}$, denote the metric basis $S$ and an edge metric basis $T$ in $G_i$ by $S_i$ and $T_i$,
respectively.
Then $a_{\alpha}^1\in S_1\cap T_1$ and $j_1^2,a_{\alpha}^2\in S_2\cap T_2$,
and moreover, $d_{G_2}(a_{\alpha}^2,j_1)\ge d_{G_2}(a_{\alpha}^2,z)$ for every
$z\in V(G_2)$.
Hence, the graphs $G_1$ and $G_2$ satisfy the assumptions of
Lemma~{\ref{lem:operation}}.
Thus, $L^2_{n_1,n_2,n_3}$ has metric dimension $n_3$ and edge metric
dimension $n_3+2$.
Moreover, $S=S_1\cup S_2-\{a_{\alpha}^1,j_1^2\}$ is a metric basis of
$L^2_{n_1,n_2,n_3}$ and $T=T_1\cup T_2-\{a_{\alpha}^1,j_1^2\}$ is an edge
metric basis of $L^2_{n_1,n_2,n_3}$, for which $a_{\alpha}^2\in S\cap T$.

Since $L^{2}_{n_1,n_2,n_3}$ and $G_3$ satisfy the assumptions of
Lemma~{\ref{lem:operation}}, we can then proceed with $L^{2}_{n_1,n_2,n_3}$ and $G_3$ instead of
$G_1$ and $G_2$, and continue with this process until we reach the largest value of $\ell$.
This concludes the proof.
\end{proof}

By using the exposition of results above, we are then able to complete the proof of Theorem \ref{th:main}. That is, if $k_1<k_2$, then $\dim(L^{k_2-k_1}_{5,n_2,k_1})=k_1$ and $\edim(L^{k_2-k_1}_{5,n_2,k_1})=k_2$, by Lemma~{\ref{thm:L}}.
Hence, if $n_0=|V(L^{k_2-k_1}_{5,1,k_1})|$, then for every $n\ge n_0$ the
graph $L^{k_2-k_1}_{5,1+n-n_0,k_1}$ has the required properties.

On the other hand, if $k_1>k_2$, then $\dim(L^{k_1-k_2}_{6,n_2,k_1})=k_1$ and
$\edim(L^{k_1-k_2}_{6,n_2,k_2})=k_2$, by Lemma~{\ref{thm:L}}.
Hence, if $n_0=|V(L^{k_1-k_2}_{6,1,k_2})|$, then for every $n\ge n_0$ the
graph $L^{k_1-k_2}_{6,1+n-n_0,k_2}$ has the required properties. \hfill $\blacksquare$ \medskip

\section{Further work}

The graphs from Figure \ref{fig:G-7-3-2}, together with the graphs $L^{\ell}_{n_1,n_2,n_3}$ defined above, for $n_1$ even, allow to think that characterizing the whole class of graphs $G$ for which $\edim(G)<\dim(G)$ is a highly challenging problem, since the structures that such graphs can have is rather wide. In this concern, observe also for instance the graphs of Figure \ref{fig:g10}, which have order 11, and other examples are the already mentioned torus graphs $C_{4r}\square C_{4t}$.

\begin{figure}[ht]
\centering
\includegraphics[width=70mm, height=60mm]{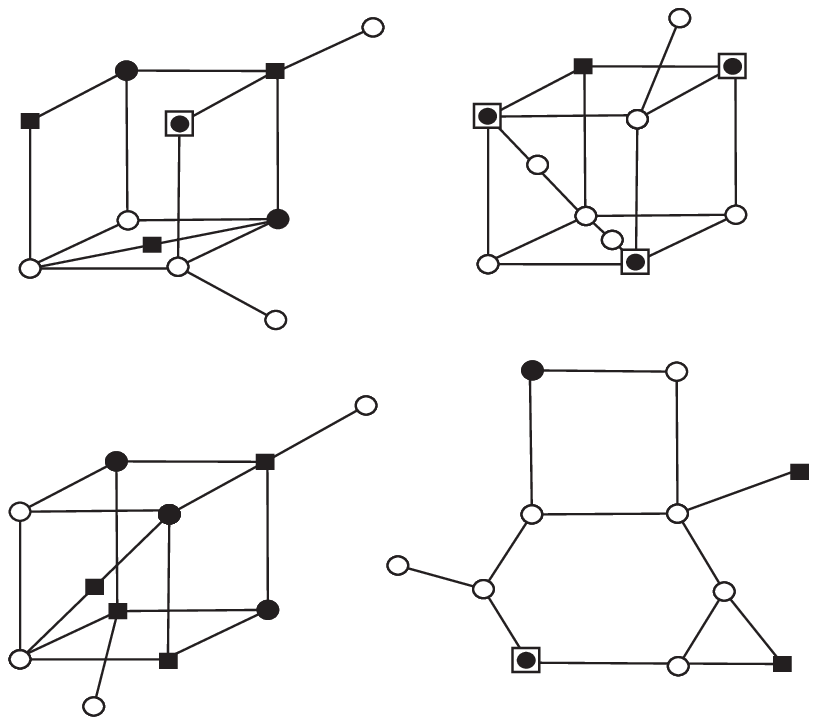}
\caption{Some graphs $G$ with $11$ vertices for which  $\dim(G)>\edim(G)$. As in Figure \ref{fig:g10}, the squared vertices form a metric basis and the circled bolded vertices form an edge metric basis.}
\label{fig:g10}
\end{figure}

Notwithstanding, one could think into characterizing some special families of graphs achieving this property. Thus, some open problems that would be of interest from our point of view are the following ones.
\begin{itemize}
  \item Characterize the class of unicyclic graphs $G$ for which $\edim(G)<\dim(G)$.
  \item Characterize all the graphs (or maybe only the unicyclic ones) $G$ for which $\edim(G)=\dim(G)-1$.
  \item Characterize all the graphs $G$ for which ($\edim(G)=2$ and $\dim(G)=3$) or ($\edim(G)=3$ and $\dim(G)=4$).
  \item Find some necessary and/or sufficient conditions for a connected graph $G$ to satisfy that $\edim(G)<\dim(G)$.
\end{itemize}

\vskip 1pc
\noindent{\bf Acknowledgements.}~~The authors acknowledge partial support
by Slovak research grants VEGA 1/0142/17, VEGA 1/0238/19, APVV--15--0220,
APVV--17--0428, Slovenian research agency ARRS program \ P1--0383  and project J1-1692.

\end{document}